\documentclass{amsart}
\usepackage{graphicx}
\usepackage[usenames]{color}
\usepackage[normalem]{ulem}
\usepackage{epigraph}
\usepackage{cancel}
\vfuzz2pt 
\hfuzz2pt 
\newtheorem{thm}{Theorem}[section]

\theoremstyle{definition}
\newtheorem{defn}[thm]{Definition}
\theoremstyle{remark}

\numberwithin{equation}{section}

\newcommand{\abs}[1]{\left\vert#1\right\vert}

\begin{document}

\title[Applications of Zalcman's lemma ]{Applications of Zalcman's lemma in $C^N$}%
\author{P.V. Dovbush}%
\address{Institute of Mathematics and Computer Science of Moldova, 5 Academy  Street,  Kishinev, MD-2028,
{Republic of Moldova}}%
\email{peter.dovbush@gmail.com}%

\subjclass{32A19}%
\keywords{{Zalcman's Lemma}, normal families, holomorphic functions of several complex variables}%


\begin{abstract}
{The aim of this paper is to give  some applications of Zalcman's Rescalling Lemma.}
\end{abstract}
\maketitle
\section{introduction}
There is a general principle that has been used as a guide to the study
of normality-often called Bloch's principle. The idea is that if a property
forced on entire functions causes the function to be constant, then that
property on a family of functions on a region forces the family to be normal.
 The name ''Bloch's principle'' comes from
the statement in his paper  (the original quote is in Latin): "There is
nothing in the infinite which did not exist before in the finite."

Of course, the principle is not a rigorous statement, since the term ''property'' that used in the
formulation of Bloch's principle is not precisely defined. In fact, the are various counterexample
to this principle.

In his article \cite{[ZL1]} from 1998, Lawrence Zalcman modestly says that he proved his
''little lemma'' to give Bloch's principle precise form.  Earlier,
Lohwater-Pomme-\-renke \cite{[LP]} had used a similar construction but for single functions on a unit disc
obeying $\sup_{\{|z|<1\}}(1 - |z|^2)f^\sharp(z) < \infty$ (such functions are
called normal functions) rather than families. Pommerenke  got the idea from reading Landau \cite{[ELA]}. Landau in turn credits Valiron \cite{[VAG]} with this idea. Of course, what can be found in Valiron's paper is still far away from the statement of the lemma in Zalcman's paper.
The general idea of multiplying by $(1 - |z|^2)$ used in the proof of
Zalcman's lemma is due to Landau \cite{[ELA1]}.

Not surprisingly, Zalcman's lemma
generated considerable followup, summarized in Zalcman \cite{[ZL1]}.
For a detailed history of the Zalcman lemma, it extension, and it applications up to 2017, see Steinmetz \cite{[SN]}.

Let us illustrate the use of Zalcman's Rescalling Lemma by showing  how it can be used to derive Montel's
Theorem in several complex variables see also \cite{[ZL2]}.  First of all we  shall need  one auxiliary proposition for the proof of which see, for example, \cite[Corollary,  p.80]{[NRS]}.
\begin{thm} \label{[HL]} (Hurwitz's theorem) Let $\Omega$ be an open connected set
in $C^n$ and $\{g_j(z)\}$ a sequence of holomorphic functions on $\Omega,$ converging
uniformly on compact sets to a holomorphic function $g.$ Then if
$g_j (z) \neq 0$ for all $j ,$ and all $z,$ and $g$ is nonconstant, we have $g(z)\neq 0$
for all $z \in \Omega.$
\end{thm}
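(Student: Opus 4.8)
\emph{Proof proposal.} The plan is to reduce this several–variable statement to the classical one–variable Hurwitz theorem by restricting $g$ to a suitable complex line. Arguing by contradiction, I would suppose $g(z_0)=0$ for some $z_0\in\Omega$. Since $g$ is nonconstant we have $g\not\equiv0$, so by the identity theorem in $C^n$ the zero set $\set{z\in\Omega:g(z)=0}$ has empty interior; hence, fixing a ball $B(z_0,\delta)\subset\Omega$, one can choose $z_1\in B(z_0,\delta/2)$ with $g(z_1)\ne0$ and put $v=z_1-z_0\ne0$. Then $z_0+tv\in B(z_0,\delta)\subset\Omega$ whenever $\abs t<2$, and
\[
\phi_j(t):=g_j(z_0+tv),\qquad \phi(t):=g(z_0+tv)
\]
define holomorphic functions of the single variable $t$ on $\set{\abs t<2}$.

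Next I would record the one–variable data: each $\phi_j$ is zero–free on $\set{\abs t<2}$; $\phi_j\to\phi$ uniformly on the compact disc $\set{\abs t\le1}$, because $\set{z_0+tv:\abs t\le1}$ is a compact subset of $\Omega$ and the hypothesised convergence is uniform there; and $\phi$ is holomorphic with $\phi(0)=g(z_0)=0$ and $\phi(1)=g(z_1)\ne0$, so $\phi\not\equiv0$. Consequently the zeros of $\phi$ are isolated, and I would pick $\rho\in(0,1]$ with $\phi$ nonvanishing on the circle $\abs t=\rho$ and set $m:=\min_{\abs t=\rho}\abs{\phi(t)}>0$. For all sufficiently large $j$ one has $\abs{\phi_j(t)-\phi(t)}<m\le\abs{\phi(t)}$ on $\abs t=\rho$, so Rouch\'e's theorem gives that $\phi_j$ and $\phi$ have the same number of zeros, counted with multiplicity, in $\set{\abs t<\rho}$. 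Since $\phi(0)=0$ that number is at least $1$, whereas $\phi_j$ has no zeros at all — the desired contradiction. Therefore $g(z)\ne0$ for every $z\in\Omega$.

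The only step that is genuinely several–variable in nature, and the one I expect to need the most care, is the choice of the direction $v$: one must produce a complex line through $z_0$ along which $g$ vanishes at $z_0$ but is not identically zero, and this is precisely where the hypothesis that $g$ is nonconstant enters, via the identity theorem in $C^n$. Everything afterwards is the standard one–variable Hurwitz/Rouch\'e argument (equivalently, one may note that the integer $\frac{1}{2\pi i}\oint_{\abs t=\rho}(\phi_j'/\phi_j)\,dt$ tends to the corresponding zero–count for $\phi$, which is $\ge1$).
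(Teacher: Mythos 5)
Your argument is correct and complete: reducing to one variable by slicing along the complex line through the putative zero $z_0$ and a nearby point $z_1$ with $g(z_1)\neq 0$ (such a $z_1$ exists by the identity theorem precisely because $g$ is nonconstant, the one genuinely several-variable step, which you handle correctly), and then applying Rouch\'e on a small circle avoiding the zeros of the sliced function, is the standard proof of this result. Note that the paper itself offers no proof to compare against --- it quotes the statement as an auxiliary proposition with a reference to Narasimhan \cite[Corollary, p.~80]{[NRS]} --- so your write-up simply supplies the classical argument that the citation points to.
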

The theorem  of Montel is easy
consequence of  Zalcman's  lemma \cite{[DP]}.

\begin{thm} \label{{MT}} (Montel's theorem) Let $\mathcal F$ be a family of holomorphic functions on an
open set $\Omega \subseteq C^n$  that  omit  two fixed  complex  values. Then, each sequence of functions in $\mathcal F$ has a subsequence which
converges uniformly on compact subsets.
\end{thm}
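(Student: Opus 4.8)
The plan is to deduce Montel's theorem from the one-variable little Picard theorem, using Zalcman's Rescaling Lemma to manufacture an entire function on all of $C^n$ and Theorem~\ref{[HL]} (Hurwitz) to control its omitted values. First, since the conclusion is unchanged if we post-compose every member of $\mathcal F$ with the affine map $w \mapsto (w-a)/(b-a)$, where $a \neq b$ are the two omitted values, we may assume every $f \in \mathcal F$ omits the values $0$ and $1$. Normality is a local property, so it suffices to prove that $\mathcal F$ is normal at each point $z_0 \in \Omega$. Suppose, for contradiction, that $\mathcal F$ fails to be normal at some $z_0$. Then Zalcman's Rescaling Lemma furnishes functions $f_j \in \mathcal F$, points $p_j \to z_0$, and positive numbers $\rho_j \to 0$ such that the rescaled maps $g_j(\zeta) := f_j(p_j + \rho_j \zeta)$ converge, uniformly on compact subsets of $C^n$, to a \emph{non-constant} entire function $g$ on $C^n$.

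Next I would pin down the omitted values of $g$. Fix $R > 0$. Since $p_j \to z_0 \in \Omega$ and $\rho_j \to 0$, the ball $\{|\zeta| < R\}$ is carried into $\Omega$ by $\zeta \mapsto p_j + \rho_j \zeta$ for all large $j$, so on that ball each $g_j$ is holomorphic and, like $f_j$, omits the values $0$ and $1$. The limit $g$ is non-constant on $C^n$, hence (by the identity theorem) non-constant on $\{|\zeta| < R\}$. Applying Theorem~\ref{[HL]} on $\{|\zeta| < R\}$ to the sequence $\{g_j\}$, and again to $\{g_j - 1\}$, shows $g \neq 0$ and $g \neq 1$ throughout $\{|\zeta| < R\}$. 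As $R$ was arbitrary, $g \colon C^n \to C \setminus \{0,1\}$ is a non-constant entire function omitting $0$ and $1$.

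Finally I would slice down to one variable. Because $g$ is non-constant, some first-order partial derivative $\partial g/\partial \zeta_k$ is not identically zero, say $\partial g/\partial \zeta_k(\zeta_0) \neq 0$ for some $\zeta_0 \in C^n$. Then $h(\lambda) := g(\zeta_0 + \lambda e_k)$ is entire on $C$, satisfies $h'(0) \neq 0$, and omits $0$ and $1$ --- contradicting the little Picard theorem. Hence $\mathcal F$ is normal at $z_0$; as $z_0 \in \Omega$ was arbitrary and normality is local, $\mathcal F$ is a normal family, i.e.\ every sequence in $\mathcal F$ has a subsequence converging uniformly on compact subsets of $\Omega$. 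The one genuinely non-trivial ingredient is the several-variables form of Zalcman's Rescaling Lemma invoked in the first paragraph --- in particular, the fact that the rescaling produces a limit which is entire and non-constant on \emph{all} of $C^n$ (not merely along a complex line) and that one may arrange convergence on every compact subset of $C^n$ simultaneously; granting that lemma, the Hurwitz step and the slicing to Picard are routine.
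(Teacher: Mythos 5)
Your proof is correct and follows essentially the same route as the paper: normalize the omitted values to $0$ and $1$, apply Zalcman's Rescaling Lemma to obtain a non-constant entire limit $g$ on $C^n$, use Hurwitz's theorem to see that $g$ omits $0$ and $1$ (your restriction to balls $\{|\zeta|<R\}$ handles the domain issue slightly more carefully than the paper does), and then slice to a complex line to contradict the one-variable little Picard theorem. The only cosmetic difference is at the final step: you pick a single line along which a directional derivative of $g$ is nonzero and contradict Picard directly, whereas the paper slices along every line through the origin, concludes each restriction is constant, and derives $g(b)=g(0)$ for all $b$; both are equally valid.
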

\begin{proof} Composing the functions of $\mathcal F$ with a linear fractional transformation, we may
also assume that the omitted values are $0$ and $1.$  Suppose $\mathcal F$ is not normal on $\Omega.$ Then by Zalcman's Rescalling Lemma \cite{[DP]}, there exist $f_j \in \mathcal F,$ $z_j\in \Omega$
and $\rho_j \to 0+$ such that $f_j(z_j + \rho_j\zeta)=g_j(\zeta) \to g(\zeta)$ uniformly on compact subsets of $C^n,$
where $g$ is a nonconstant entire function. By  Theorem \ref{[HL]}, $g$ does not take
on the values $0$ and $1,$ since no $f_j$ does. Let $b\in C^n.$ The function
$$g_b(\lambda):=g(\lambda\cdot b)$$
is entire function on $C,$ satisfies
$$ g_b(0)=g(0), \ \ g_b(1)=g(b)$$
and
$$ g_b(C)\subset g(C^N)\subseteq C\setminus \{0,1\}.$$
But then, by one-dimensional version of Picard's Little Theorem, $g_b$ is
constant, hence $g(0)=g(b)$ for all $b\in C^n,$  a contradiction.
\end{proof}

\emph{We sketch a simple proof of the theorem of Montel, due to A.
Ros \cite[p. 218]{[ZL1]}.} Since normality is a local notion, we may suppose that $\Omega = B,$ the unit
ball. So let $\mathcal F$ be as in the statement of Montel's Theorem and
suppose that $\mathcal F$ is not normal. Composing with a linear fractional transformation, we may also assume that
the omitted values are
$ 0$ and $ 1.$  This implies that if $f \in \mathcal F$ and $m\in N$ there exists a
function $g$ holomorphic in $\Omega$ such that $g^{2^m} = f.$ Let $\mathcal F_m$ be the family of all such
functions $g.$ Note that
$$ \Big| \frac{1}{2^m}\Big|^2\Big(\frac{|f|^{-1}+|f|}{|f|^{-1/{2^m}}+|f|^{1/{2^m}}}\Big)^2L_z(\log(1+|f|^2,v)=L_z(\log(1+|g|^2,v) $$ for all
$(z,v)\in \Omega\times C^n.$
This implies that
$$  g^\sharp(z)=\frac{1}{2^m}\frac{|f|^{-1}+|f|}{|f|^{-1/{2^m}}+|f|^{1/{2^m}}}f^\sharp(z)\geq\frac{1}{2^m}f^\sharp(z) \ \ (z\in \Omega),$$
where we have used the inequality $a^{-1}+a \geq a^{-t}+a^t $ valid for $a > 0$ and $0 < t < 1.$
By Marty's Criterion, the family $ \{f^\sharp : f \in \mathcal F\}$ is not locally bounded. We deduce
that, for fixed $m \in N,$ the family  $ \{g^\sharp : g \in \mathcal F_m\}$  is not locally bounded. Using
Marty's Criterion again we find that $\mathcal F_m$ is not normal, for all $m \in N.$
Note that if $g \in \mathcal F_m,$ then $g$ omits the values $e^{2\pi i k/2m}$ for $k \in Z.$ From the Zalcman
Principle we thus deduce that there exists an entire
function $g_m$ omitting the values $e^{2\pi i k/2m}$ and satisfying $g_m(z)\leq g_m^\sharp(0) = 1.$ The $g_m$
thus form a normal family and we have $g_{m_j}\in G$ for some subsequence $\{g_{m_j}\}$
the values $e^{2\pi i k/2m}$ for all $k, m\in N.$ Since $G(C^n)$ is open this implies that $|G(z)| \neq 1$
for all $z \in C^n.$ Thus either $|G(z)| < 1$ for all $z \in C^n$ or $|G(z)| > 1$ for all $z \in C^n.$
In the first case $G$ is bounded and thus constant by Liouville's Theorem. In the
second case $1/G$ is bounded. Again $1/G$ and thus $G$ is constant. Thus we get a
contradiction in both cases.

The theorem of Schottky is an easy consequence of  the preceding work.

\begin{thm}
    (Schotthy's theorem).
    Let $M > 0$ and $r\in [0,1)$ be given.
    Then there exists a constant $C > 0$ depending only on $r$ and $f(0)$ such that the following implication holds:

    If $f$ is holomorphic in the unit ball $B$, omits $0$ and $1$ from its range, and if $|f(0)|<M$, then $|f(z)|<C$ for all $z\in B(0,r)= {z\in  C^n :|z|<r}$.
\end{thm}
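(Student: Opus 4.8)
I would prove Schottky's theorem as a compactness (contradiction) consequence of Montel's Theorem, so that the constant $C$ emerges automatically as something depending only on $M$ and $r$. Suppose that for some fixed $M>0$ and $r\in(0,1)$ no such $C$ exists. Then for every $j\in N$ one can choose a function $f_j$ holomorphic in $B(0,1)$, omitting the values $0$ and $1$, with $|f_j(0)|<M$, yet with $\sup_{z\in B(0,r)}|f_j(z)|>j$. The family $\mathcal F=\{f_j:j\in N\}$ consists of holomorphic functions on $B(0,1)$ omitting the two fixed values $0$ and $1$, so Montel's Theorem (Theorem~\ref{{MT}}) applies: after passing to a subsequence, which I still denote $\{f_j\}$, the $f_j$ converge uniformly on compact subsets of $B(0,1)$, the limit $g$ being either holomorphic on $B(0,1)$ or identically $\infty$.

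The first step is to rule out $g\equiv\infty$. Since $f_j(0)\to g(0)$ while $|f_j(0)|<M$ for every $j$, the value $g(0)$ is finite; hence $g$ is a holomorphic function on $B(0,1)$ and the convergence $f_j\to g$ is locally uniform in the ordinary (Euclidean) sense. The second step is elementary: because $r<1$, the closed ball $\overline{B(0,r)}$ is a compact subset of $B(0,1)$, so $C_0:=\max_{z\in\overline{B(0,r)}}|g(z)|$ is finite, and uniform convergence yields $\sup_{z\in B(0,r)}|f_j(z)|\le C_0+1$ for all sufficiently large $j$. This contradicts $\sup_{z\in B(0,r)}|f_j(z)|>j$, and that contradiction establishes the existence of $C$. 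Since the argument used only the data $M$ and $r$, the resulting $C$ depends only on them, as asserted; taking e.g. $C:=C_0+2$ in the final comparison makes the inequality strict.

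The only genuine obstacle is the dichotomy in the conclusion of Montel's Theorem: a priori the subsequential limit $g$ could be the constant $\infty$, in which case no local bound on $B(0,r)$ would follow. This is precisely where the normalization hypothesis $|f(0)|<M$ enters — it anchors the limit at the origin and forces $g$ to be an honest holomorphic function. Everything else (extracting a convergent subsequence, compactness of $\overline{B(0,r)}$, and the translation of ``no uniform $C$'' into a sequence with $\sup_{B(0,r)}|f_j|\to\infty$) is routine; in particular neither Hurwitz's Theorem nor Marty's Criterion is needed for this deduction, only the plain normality of families omitting two values.
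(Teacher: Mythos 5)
Your argument is correct, but it takes a genuinely different route from the paper's. You prove existence of $C$ by contradiction and compactness: a putative sequence of counterexamples $f_j$ with $|f_j(0)|<M$ and $\sup_{B(0,r)}|f_j|>j$ lies in the normal family of functions omitting $0$ and $1$; a locally uniformly convergent subsequence cannot tend to $\infty$ because the bound at the origin anchors the limit, so the limit is holomorphic and hence bounded on the compact set $\overline{B(0,r)}$, contradicting $\sup_{B(0,r)}|f_j|>j$. The paper instead makes normality quantitative: Marty's criterion gives a uniform bound $f^\sharp(z)<c(r)$ on $\overline{B(0,r)}$ for the whole family, and integrating $\frac{d}{dt}\arctan|f(tz)|$ along the radius from $0$ to $z$ yields $\arctan|f(z)|<\arctan M+c(r)$, hence the semi-explicit constant $C=\tan(\arctan M+c(r))$. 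Each approach buys something: the paper's gives a formula for $C$ in terms of the Marty bound (though that formula is only meaningful when $\arctan M+c(r)<\pi/2$, a point your qualitative argument never has to confront), while yours is shorter and needs nothing beyond plain normality, at the cost of producing a non-effective constant. Two small remarks. First, the dichotomy you invoke --- the spherical limit is either holomorphic or identically $\infty$ --- is not part of the paper's statement of Montel's theorem, so it deserves its one-line justification (the $f_j$ omit $0$, so apply Hurwitz, or the maximum principle, to $1/f_j$); you correctly identify this as the only real issue and dispose of it with the hypothesis $|f(0)|<M$. Second, your closing suggestion to take $C:=C_0+2$ is misplaced: $C_0$ is defined only inside the contradiction hypothesis, so it cannot serve as the advertised constant; the proof by contradiction establishes existence of $C$ without exhibiting it, which is all the theorem asks.
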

\begin{proof}
    Let denote $\mathbf{G}$ denote  the  family  of  functions  holomorphic  on
$B$ that omit $0$ and $1$.
    By Montel's theorem the family $\mathbf{G}\circ Aut(B)$ is normal.
    Marty's characterization of normal families yields a non-negative constant $L > 0$ such that for any $f\circ \varphi\in \mathbf{G}\circ Aut(B)$
    \[
        \frac{(f\circ \varphi)'(0)}{1+\abs{f\circ \varphi(0)}^2}<L.
    \]
    The constant $L$ does not depend on  $f \in \mathbf{G}$.
    Since $f$ is a normal function in $B$  we have
    \[
        L_z(\log (1+|f|^2),v)\leq L^2(F_{B}(z,v))^2\ \ \textrm{ for all }z\in B, v\in  C^n.
    \]

    Applying Schwarz's lemma to the restriction of the map $g: B\to \Delta$ to the complex line passing through the point $z\in B$ in the direction of the vector $v\in C^n$, it is not difficult to get the estimate
    \[
        F^C_{B}(z,v)\leq \frac{\abs{v}}{1-\abs{z}}.
    \]
    It follows
    \[
        L_z(\log (1+|f|^2),v)\leq L^2 \frac{\abs{v}^2}{(1-\abs{z})^2}\ \ \textrm{ for all }z\in B, v \in C^n.
    \]
    Taking the supremum over all $v\in C^n$ such that $\abs{v}=1$, gives
    \[
        \frac{|Df(z)|}{1+|f(z)|^2}<\frac{L}{1-r}\textrm{ for all } z\in \overline{B(0,r)}, \ \ r\in [0,1).
    \]
    Since $f$ never takes the value $0$, the function $t \to |f(tz)|$ is continuously differentiable on $(0,1)$ for any fixed $z \in B(0, r)$ and
    \[
        \frac{d}{dt}\Big(\arctan(|f(tz)|)\Big)\leq\frac{|(Df(tz),\overline{z})|}{1+|f(tz)|^2}< \frac{|Df(tz)||\overline{z}|}{1+|f(tz)|^2}<\frac{L}{1-r}.
    \]
    Then we deduce for $t\in [0,1]$
    \[
        \Big|\arctan(|f(z)|)-\arctan(|f(0)|)\Big|<\int_0^{1}\Big|\frac{d}{dt}(\arctan(|f(tz)|))\Big|dt<\frac{L}{1-r}
    \]
    and so
    \[
        \arctan(|f(z)|)<\arctan(|f(0)|)+\frac{L}{1-r}<\arctan M+\frac{L}{1-r}
    \]
    which is the theorem with $C=\tan(\arctan M+L/(1-r))$.
\end{proof}

We shall now show that the following result, in which the values
omitted are allowed to vary with the function, as long as they do not approach one
another too closely, is also true.

\begin{thm} (Carath\'{e}odory's theorem)
Let $\mathcal F$ be a family of functions holomorphic on $\Omega\subset C^n.$ Suppose that
for some $\varepsilon>0,$ there exist for each $f \in \mathcal F$ distinct points $a_f, b_f\in C $ such that
for all $z \in \Omega,$ $f(z) \neq a_f,b_f$ and
$$ s(a_f,\infty)s(a_f,b_f)s(\infty,b_f)>\varepsilon.$$
Then $\mathcal F$ is normal on $\Omega.$
\end{thm}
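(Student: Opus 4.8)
The plan is to mimic the Zalcman-lemma proof of Montel's theorem, with the twist that the omitted values now move with the function. Suppose $\mathcal F$ is not normal on $\Omega$. Since normality is local, we may localize and apply the version of Zalcman's Rescaling Lemma used above: there exist $f_j\in\mathcal F$, points $z_j\in\Omega$, and scalars $\rho_j\to 0+$ such that $g_j(\zeta):=f_j(z_j+\rho_j\zeta)$ converges uniformly on compact subsets of $C^n$ to a nonconstant entire function $g$. Each $g_j$ omits the two values $a_{f_j}$ and $b_{f_j}$ on its domain of definition.

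Next I would pass to a subsequence so that the triples $(a_{f_j},b_{f_j})$ converge in the Riemann sphere: say $a_{f_j}\to a$ and $b_{f_j}\to b$ with $a,b\in C\cup\{\infty\}$. The spherical-metric hypothesis $s(a_{f_j},\infty)s(a_{f_j},b_{f_j})s(\infty,b_{f_j})>\varepsilon$ passes to the limit and forces $a$, $b$, and $\infty$ to be three \emph{distinct} points of the sphere; in particular $a\neq b$ and both are finite complex numbers. Now I would argue that $g$ omits both $a$ and $b$: if $g$ took the value $a$ somewhere, then $g-a$ would vanish, while $g_j-a_{f_j}$ is a nowhere-zero sequence converging locally uniformly to the nonconstant function $g-a$; Hurwitz's theorem (Theorem~\ref{[HL]}) gives a contradiction, and likewise for $b$. (One must first observe that $g$ does not take the value $\infty$, i.e. is genuinely entire, which is built into the conclusion of Zalcman's lemma.)

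Finally, $g$ is a nonconstant entire function on $C^n$ omitting the two distinct finite values $a$ and $b$. To derive a contradiction I reuse the slicing device from the proof of Montel's theorem above: for $b\in C^n$ (I will rename the direction vector, say $w\in C^n$, to avoid clashing with the omitted value) the one-variable entire function $g_w(\lambda):=g(\lambda w)$ omits the values $a$ and $b$, hence is constant by Picard's Little Theorem, so $g(0)=g(w)$ for every $w\in C^n$, contradicting that $g$ is nonconstant. Hence $\mathcal F$ is normal.

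The main obstacle is the bookkeeping around the omitted values possibly running off to $\infty$: one has to work on the Riemann sphere, extract a convergent subsequence of the pairs $(a_{f_j},b_{f_j})$, and use the product-of-chordal-distances bound to guarantee that the limiting omitted pair stays away both from $\infty$ and from each other, so that Hurwitz and Picard apply to honest complex values. Once that is set up, the Hurwitz step and the one-dimensional slicing step are routine, exactly as in the Montel argument already given.
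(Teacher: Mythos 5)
Your proposal is correct and follows essentially the same route as the paper: rescale via Zalcman's lemma, pass to a subsequence of the omitted pairs (using the chordal-distance bound to keep the limits $a,b$ finite and distinct), transfer the omissions to the limit function by Hurwitz, and conclude by the one-dimensional Picard slicing argument from the Montel proof. Your handling of the subsequence extraction on the Riemann sphere is in fact spelled out more carefully than in the paper, which asserts the convergence to distinct finite points without comment.
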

 \begin{proof} Otherwise, there exists some ball $B$ in $\Omega,$ which we may assume to be unit ball, on
which $\mathcal F$ fails to be normal. Then by Zalcman's Rescalling Lemma \cite{[DP]}, there exist $f_j \in \mathcal F,$ $z_j\in B$
and $\rho_j \to 0+$ such that $f_j(z_j + \rho_j\zeta)=g_j(\zeta) \to g(\zeta)$ uniformly on compact subsets of $C^n,$
where $g$ is a nonconstant entire function.  Taking successive subsequences and
renumbering, we can assume that $ a_{f_j} \to a$ and $ b_{f_j} \to b,$  where $a$ and
$b$ are distinct points in $C.$ Since $g_j(\zeta) - a_{f_j}\neq 0$ and $g$ is nonconstant, it follows
from  Theorem \ref{[HL]} that $g(\zeta) \neq a.$ Similarly, $g(\zeta)\neq b.$ Again, we may assume that the omitted values are $0$ and $1.$
The reasoning used at the end of Theorem \ref{{MT}} shows that $g$ is a constant, a contradiction.
\end{proof}

Montel's theorem remains valid if the omitted values are replaced by omitted
functions, so long as the omitted functions never take on the same value at points of $\Omega.$

\begin{thm}  (Fatou's theorem)
Let $a(z)$ and $b(z)$ be functions holomorphic on $\Omega \subset C^n$ such that
$a(z) \neq b(z)$
for each $z \in \Omega.$ Let $\mathcal F$ be a family of functions holomorphic on $\Omega$ such that for each
$z \in \Omega$
$$f(z) \neq a(z) \qquad f(z) \neq b(z) $$
for all $f\in \mathcal F.$ Then $\mathcal F$ is normal on $\Omega.$
\end{thm}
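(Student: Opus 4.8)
The plan is to reduce Fatou's theorem to the case of two omitted constants already settled in Theorem \ref{{MT}}, using the same rescaling device employed in the proof of Carath\'eodory's theorem. Since normality is a local property, if $\mathcal{F}$ were not normal on $\Omega$ it would fail to be normal on some ball whose closure lies in $\Omega$; after an affine change of coordinates I may take that ball to be the unit ball $B$ with $\overline{B}\subset\Omega$. Assuming $\mathcal{F}$ is not normal on $B$, Zalcman's Rescalling Lemma \cite{[DP]} produces $f_j\in\mathcal{F}$, $z_j\in B$ and $\rho_j\to 0+$ with $g_j(\zeta):=f_j(z_j+\rho_j\zeta)\To g(\zeta)$ uniformly on compact subsets of $C^n$, where $g$ is a nonconstant entire function; passing to a subsequence I may assume $z_j\to z_0\in\overline{B}\subset\Omega$.

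The key step is to introduce the normalized functions
$$h_j(\zeta):=\frac{f_j(z_j+\rho_j\zeta)-a(z_j+\rho_j\zeta)}{b(z_j+\rho_j\zeta)-a(z_j+\rho_j\zeta)}.$$
For each compact $K\subset C^n$ and all large $j$ the points $z_j+\rho_j\zeta$, $\zeta\in K$, lie in $\Omega$, and since $a(w)\neq b(w)$ throughout $\Omega$ the denominator never vanishes, so $h_j$ is holomorphic near $K$. Because $f_j(w)\neq a(w)$ and $f_j(w)\neq b(w)$, each $h_j$ omits the values $0$ and $1$. By continuity of $a$ and $b$ one has $a(z_j+\rho_j\zeta)\to a(z_0)$ and $b(z_j+\rho_j\zeta)\to b(z_0)$ uniformly on $K$, so
$$h_j(\zeta)\To h(\zeta):=\frac{g(\zeta)-a(z_0)}{b(z_0)-a(z_0)}$$
uniformly on compact subsets of $C^n$, and $h$ is a nonconstant entire function since $g$ is nonconstant and $b(z_0)\neq a(z_0)$.

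To finish, I would apply Hurwitz's theorem (Theorem \ref{[HL]}) twice: to the zero-free sequence $h_j$ and to the zero-free sequence $h_j-1$; since $h$ and $h-1$ are nonconstant this yields $h(\zeta)\neq 0$ and $h(\zeta)\neq 1$ on all of $C^n$. Thus $h$ is a nonconstant entire function on $C^n$ omitting $0$ and $1$, and the slicing argument at the end of the proof of Theorem \ref{{MT}} — restricting to the complex lines $\lambda\mapsto h(\lambda\cdot b)$ and invoking the one-variable Picard Little Theorem — forces $h$ to be constant, a contradiction. Hence $\mathcal{F}$ is normal on $\Omega$.

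The point requiring the most care is the second paragraph: one must check that replacing $f_j$ by the quotient $h_j$, that is, dividing by the non-vanishing holomorphic function $b-a$ evaluated at the moving points $z_j+\rho_j\zeta$, both preserves local uniform convergence and produces a nonconstant limit. This is guaranteed by the continuity of $a$, $b$ together with $b(z_0)\neq a(z_0)$, so no serious obstacle arises; the argument is essentially a transcription of the proofs of Montel's and Carath\'eodory's theorems with the constants $0,1$ replaced by $a(z),b(z)$.
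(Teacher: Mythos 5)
Your argument is correct, but it takes a genuinely different route from the paper. The paper's proof is a two-line reduction: it forms the transformed family $\mathcal G=\{(f-a)/(f-b): f\in\mathcal F\}$, observes that each member is holomorphic (since $f\neq b$) and omits $0$ and $1$ (since $f\neq a$ and $a\neq b$), invokes the already-established several-variable Montel theorem to get normality of $\mathcal G$, and then recovers normality of $\mathcal F$ by inverting the M\"obius transformation. You instead re-run the Zalcman rescaling machinery from scratch: you extract the nonconstant entire limit $g$ of $f_j(z_j+\rho_j\zeta)$, and then apply a \emph{moving affine} normalization $h_j=(f_j-a)/(b-a)$ evaluated along the shrinking charts, using continuity of $a,b$ at the limit point $z_0$ and $a(z_0)\neq b(z_0)$ to pass to the limit $h=(g-a(z_0))/(b(z_0)-a(z_0))$, which omits $0,1$ by Hurwitz and is then killed by slicing plus one-variable Picard. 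Both arguments are sound. The paper's route is shorter but leans on the step ``normality of $\mathcal G$ implies normality of $\mathcal F$,'' which it dismisses as ``easily seen'' even though inverting $g\mapsto(gb-a)/(g-1)$ requires a word about the degenerate case where a subsequence of the transformed functions tends to the constant $1$ (so that the original $f_j$ tend to $\infty$); your direct rescaling argument sidesteps that issue entirely, at the cost of repeating the limit analysis already packaged inside Montel's theorem. One stylistic remark: your affine normalization by $b-a$ is arguably cleaner than the paper's M\"obius one, since holomorphy of $h_j$ needs only $a\neq b$ and the two omitted values $0,1$ correspond exactly to the two hypotheses $f\neq a$ and $f\neq b$.
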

\begin{proof} Consider the family of functions
$$\mathcal G= \Big\{\frac{f(z) - a(z)}{f(z) - b(z)} \textrm{ for all } f\in \mathcal F \Big\}.$$
Then each $g \in \mathcal G$ is holomorphic on $\Omega;$ and if $g \in  \mathcal G,$ then $g(z) \neq 0,1$ for $z \in \Omega.$
Thus $\mathcal G$ is normal on $\Omega$  by  Theorem \ref{{MT}}. But then, as is easily seen, $\mathcal F$ is normal
on $\Omega$ as well.
\end{proof}

\begin{defn} Let $g(\lambda)$ be an entire (holomorphic in $C$) function, if  the  equation  $g(\lambda) =a,$ $a\in C,$ has  no  simple roots then $a$ called a totally
ramified values.
\end{defn}

 Note that an omitted value trivially satisfies this definition,
but that it will be useful to distinguish between omitted values and non-omitted
totally ramified values.

\begin{thm} (R. Nevanlinna) \cite[Theorem 17.3.10., p. 274]{[BAS]}
Let $g$ be an entire  function. Then $g$ has at
most two totally ramified (finite) values.
\end{thm}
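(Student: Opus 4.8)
The plan is to deduce this from Nevanlinna's Second Main Theorem. One may assume $g$ is non-constant, since a constant function totally ramifies (indeed omits) all but one value and the statement is to be read as excluding this case. Since ``at most two'' means ``no three,'' it suffices to show that whenever $a_{1},\dots,a_{q}$ are distinct totally ramified finite values of $g$ one has $q\le 2$ (if there were infinitely many, apply this to any three of them). If $q\le 1$ there is nothing to prove, so assume $q\ge 2$, in which case the $q+1$ points $a_{1},\dots,a_{q},\infty$ number at least three.

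Applying the Second Main Theorem to these $q+1$ points, and using that $g$ is entire so that its pole counting function vanishes, I would obtain
$$(q-1)\,T(r,g)\ \le\ \sum_{j=1}^{q}\overline{N}\!\left(r,\frac{1}{g-a_{j}}\right)+S(r,g);$$
(it is essential here that $\infty$ is among the chosen points — this is exactly what distinguishes entire from meromorphic and turns the bound $4$ into $2$). Total ramification of $a_{j}$ means every zero of $g-a_{j}$ has multiplicity at least $2$, so $\overline{N}(r,1/(g-a_{j}))\le\frac12 N(r,1/(g-a_{j}))\le\frac12\left(T(r,g)+O(1)\right)$, the last step by the First Main Theorem. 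Substituting and collecting terms yields $\frac{q-2}{2}\,T(r,g)\le S(r,g)+O(1)$. For transcendental $g$ we have $T(r,g)\to\infty$ and $S(r,g)=o(T(r,g))$ as $r\to\infty$ outside a set of finite measure, so dividing by $T(r,g)$ along admissible radii forces $q-2\le 0$; for $g$ a polynomial the same inequality holds at once, since then $S(r,g)=O(1)$ while $T(r,g)\to\infty$. Either way $q\le 2$.

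The single deep ingredient is the Second Main Theorem itself, which I would not reprove but simply cite — it is contained in the reference already quoted for the statement — and that is where the whole difficulty resides. Granting it, the remainder is routine: the elementary ramification estimate $\overline{N}\le\frac12 N$ for a totally ramified value, the bound $N(r,a)\le T(r,g)+O(1)$ from the First Main Theorem, and the smallness of the error term $S(r,g)$. The only point needing a little care is handling the exceptional set of radii on which the last estimate may fail, which is standard.
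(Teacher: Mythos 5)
The paper offers no proof of this statement at all --- it is quoted verbatim from Simon's book with a citation --- so there is nothing internal to compare against; your argument via the Second Main Theorem (applied to $a_1,\dots,a_q,\infty$, dropping the pole term since $g$ is entire, and using $\overline{N}\le\tfrac12 N\le\tfrac12 T+O(1)$ at each totally ramified value) is the standard one and is exactly how the cited reference proves it. Your bookkeeping is correct, including the need for $q\ge 2$ so that the Second Main Theorem applies to at least three points, and the separate disposal of the polynomial case.
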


Let $f$ be a holomorphic function on an open connected set $\Omega$ in  $C^n.$  Define the value set by
$$A_f(a)=\{z\in \Omega : f(z)=a\}= f^{-1}[\{a\}].$$

 \begin{thm}  (Hurwitz's theorem). \cite[Corollary p.80]{[NRS]} Let  $\Omega$  be an open connected set
in  $C^n$  and let $\{f_j\}$ be a  sequence of holomorphic functions  on  $\Omega,$  converging
uniformly on compact sets to a nonconstant holomorphic function  $f.$  If
$A_{f_j}(a)=\emptyset$  for all  $j  $    then  $A_f(a)=\emptyset.$
\end{thm}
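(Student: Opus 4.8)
The plan is to reduce the statement directly to the version of Hurwitz's theorem already recorded as Theorem \ref{[HL]}, via a translation in the target variable. Set $g_j(z) := f_j(z) - a$ and $g(z) := f(z) - a$ for $z \in \Omega$. Each $g_j$ is holomorphic on $\Omega$, and since $f_j \to f$ uniformly on compact subsets of $\Omega$, subtracting the fixed constant $a$ does not disturb this, so $g_j \to g$ uniformly on compact subsets as well. Moreover $g$ is nonconstant precisely because $f$ is.

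Next I would translate the hypothesis $A_{f_j}(a) = \emptyset$. By definition $A_{f_j}(a) = f_j^{-1}[\{a\}]$, so the condition $A_{f_j}(a) = \emptyset$ says exactly that $f_j(z) \neq a$ for every $z \in \Omega$, i.e. that $g_j(z) \neq 0$ for all $z \in \Omega$ and all $j$. This is the non-vanishing hypothesis appearing in Theorem \ref{[HL]}.

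With these observations in place, Theorem \ref{[HL]} applies to the sequence $\{g_j\}$ without change: the $g_j$ are holomorphic on the open connected set $\Omega$, converge uniformly on compact sets to the holomorphic function $g$, satisfy $g_j(z) \neq 0$ for all $j$ and all $z$, and $g$ is nonconstant. Hence $g(z) \neq 0$ for all $z \in \Omega$, that is $f(z) \neq a$ for all $z \in \Omega$, which is exactly the assertion $A_f(a) = f^{-1}[\{a\}] = \emptyset$.

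Since the argument is a one-line reduction, there is no real obstacle; the only points needing a moment's care are verifying that nonconstancy and locally uniform convergence are genuinely inherited by $g$ from $f$, and that the set-theoretic statement $A_{f_j}(a)=\emptyset$ coincides with the pointwise non-vanishing of $g_j$ used in Theorem \ref{[HL]}. If one preferred a proof not invoking Theorem \ref{[HL]}, one could argue locally on complex lines through a hypothetical point of $A_f(a)$ and apply the one-variable Hurwitz theorem together with the argument principle, but reusing Theorem \ref{[HL]} is the cleanest route and the one I would take.
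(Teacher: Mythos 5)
Your proposal is correct: the paper states this result without proof (citing Narasimhan), and the statement is just Theorem \ref{[HL]} rewritten in the value-set notation $A_f(a)=f^{-1}[\{a\}]$, so your reduction via $g_j:=f_j-a$, $g:=f-a$ is exactly the intended (and essentially only) argument. Nothing is missing.
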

Let $\zeta,v\in C^n,$ $v\neq 0.$ The set
$$\{\xi\in C^n : \xi=\zeta+\lambda v, \lambda\in C\}$$
is called a complex line in $C^n.$

The restriction of an entire (holomorphic in $C^n$) function $g$ to a complex line $\{\xi=\zeta+\lambda v, \lambda\in C\}$ clearly is an entire
$g(\zeta+\lambda\cdot v)$ of complex variable $\lambda$ in $C.$

Mimic the proof given in the one-dimensional case in \cite[Theorem p. 219]{[ZL1]} we can prove  the following normality criterion. The original result
due to Lappan \cite{[PLA]}  corresponds to normal functions.

\begin{thm} \label{LT} family $\mathcal F$  of holomorphic functions on a domain $\Omega\subset C^n$  is normal
on $\Omega$  if and only if for each compact set $K \subset \Omega,$ there exists a set
$E = E(K) \subset C$ containing at least three  distinct values and a finite constant $M=M(K)>0$ for which
\begin{equation}\label{e1}
f^\sharp(z) \leq M \ \ z\in K, f(z)\in E
\end{equation}
for all  $f\in \mathcal F.$
 \end{thm}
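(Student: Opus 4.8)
The plan is to prove the two implications separately, following the scheme that has worked for Montel's, Carathéodory's, and Fatou's theorems above: the necessity direction is an immediate consequence of Marty's criterion, while the sufficiency direction is a contradiction argument driven by Zalcman's Rescaling Lemma together with the Nevanlinna totally-ramified-values theorem.

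\textit{Necessity.} Suppose $\mathcal F$ is normal on $\Omega$ and fix a compact $K\subset\Omega$. By Marty's criterion there is a finite $M=M(K)$ with $f^\sharp(z)\le M$ for all $z\in K$ and all $f\in\mathcal F$; taking $E=C$ (any three distinct values will do, but there is no loss in allowing all of $C$) gives (\ref{e1}). So this half is essentially definitional.

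\textit{Sufficiency.} Assume the condition holds but $\mathcal F$ is not normal. Normality being local, I would pick a closed ball $\overline{B}\subset\Omega$ on which $\mathcal F$ fails to be normal, set $K=\overline{B}$, and let $E=E(K)$ and $M=M(K)$ be as in the hypothesis; write $a_1,a_2,a_3\in E$ for three distinct values. By Zalcman's Rescaling Lemma there are $f_j\in\mathcal F$, $z_j\in B$ and $\rho_j\to0+$ with $g_j(\zeta):=f_j(z_j+\rho_j\zeta)\to g(\zeta)$ uniformly on compact subsets of $C^n$, where $g$ is a nonconstant entire function, and moreover one may arrange $g^\sharp(\zeta)\le g^\sharp(0)=1$. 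The key point is to show $g$ omits each $a_i$, so that $g$ has three totally ramified (indeed omitted) values, contradicting the Nevanlinna theorem cited above. To do this, fix $i$ and suppose $g(\zeta_0)=a_i$ for some $\zeta_0\in C^n$. Then by Hurwitz's theorem (the version with $A_f(a)$, in the form that if $g-a_i$ vanishes somewhere and is a limit of the $g_j-a_i$, the $g_j-a_i$ must vanish near $\zeta_0$ for large $j$) there are $\zeta_j\to\zeta_0$ with $g_j(\zeta_j)=a_i$, i.e. $f_j(w_j)=a_i\in E$ where $w_j=z_j+\rho_j\zeta_j\in K$ for large $j$. Applying the hypothesis (\ref{e1}) at $w_j$ gives $f_j^\sharp(w_j)\le M$. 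Now I would compute how $g_j^\sharp$ relates to $f_j^\sharp$ under the rescaling: since $g_j(\zeta)=f_j(z_j+\rho_j\zeta)$, one has $g_j^\sharp(\zeta)=\rho_j\,f_j^\sharp(z_j+\rho_j\zeta)$, hence $g_j^\sharp(\zeta_j)=\rho_j f_j^\sharp(w_j)\le\rho_j M\to0$. On the other hand $g_j^\sharp(\zeta_j)\to g^\sharp(\zeta_0)$ by uniform convergence of $g_j$ (and its first derivatives) near $\zeta_0$, so $g^\sharp(\zeta_0)=0$. But at a point where $g=a_i$ and $g^\sharp=0$ the full differential $Dg(\zeta_0)$ vanishes, so $a_i$ is attained by $g$ with multiplicity $\ge2$ along every complex line through $\zeta_0$; more to the point, combined with the fact that $g^\sharp$ vanishes at \emph{every} $a_i$-point (the argument applies verbatim at any such point), the value $a_i$ is totally ramified for the restriction of $g$ to a suitable complex line.

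The cleanest way to finish is to reduce to one complex variable. Choose $v\in C^n\setminus\{0\}$ and consider $h(\lambda):=g(\lambda v)$ (or more generally $g(\zeta_0+\lambda v)$ through a given point); $h$ is a nonconstant entire function on $C$ provided $v$ is chosen so that $g$ is nonconstant along that line, which is possible since $g$ is nonconstant. The argument above shows that whenever $h(\lambda_0)=a_i$ we get $Dg=0$ at $\lambda_0 v$, hence $h'(\lambda_0)=(Dg(\lambda_0 v),v)=0$, so $a_i$ is a totally ramified value of $h$ for each of the three values $a_1,a_2,a_3\in E$. This contradicts R. Nevanlinna's theorem that an entire function has at most two totally ramified finite values, completing the proof. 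The main obstacle is the bookkeeping in this last reduction: one must choose the complex line so that $h$ is genuinely nonconstant while still ensuring that the three values $a_1,a_2,a_3$ are each totally ramified for $h$; the safe route is first to establish that each $a_i$-point $\zeta_0$ of $g$ is a critical point of $g$ (i.e. $Dg(\zeta_0)=0$), which is a statement about $g$ on $C^n$ and does not depend on any choice of line, and only afterwards restrict to a line on which $g$ is nonconstant — every $a_i$-point of the restriction is then automatically a zero of the derivative of the restriction.
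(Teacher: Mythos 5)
Your proof is correct and follows essentially the same route as the paper: Marty's criterion for necessity, and for sufficiency Zalcman's lemma plus Hurwitz's theorem and the rescaling identity $g_j^\sharp(\zeta)=\rho_j f_j^\sharp(z_j+\rho_j\zeta)$ to show $Dg=0$ at every $a_i$-point of $g$, followed by restriction to a complex line on which $g$ is nonconstant and Nevanlinna's theorem on totally ramified values. The only slip is the sentence announcing that the key point is to show $g$ \emph{omits} each $a_i$ (which is neither true in general nor needed); the argument you actually carry out --- every $a_i$-point of $g$ is a critical point, hence each $a_i$ is totally ramified (or omitted) for the line restriction --- is the right one and matches the paper's.
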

\begin{proof} Marty's Theorem shows that (\ref{e1}) is necessary with $E = C.$

To prove sufficiency, suppose that (\ref{e1}) holds but $\mathcal F$ is not normal at point $z_0\in \Omega.$ Since all features of the theorem are local, and translation and scale-change invariant, it suffices to consider the case when $z_0=0$ and $\Omega={B(0,1)}.$  If $\mathcal F$ is not normal at $0,$ it follows from Zalcman's lemma \cite[Theorem 3.1]{[DP]}  that there exist $f_j\in \mathcal F,$ $z_j\to 0,$ $\rho_j\to 0,$
 such that the  sequence $$g_j(z):=f_j(z_j+\rho_j z) $$
 converges uniformly  on compact subsets of  $C^n$ to a non-constant entire function $g$ satisfying $g^\sharp(z)\leq g^\sharp(0)=1.$

 Let $K \subset B(0,1)$ be a closed ball in $C^n$ about $0.$ Let $E(K)\supset\{a_1,a_2,a_3\},$ where $a_1,a_2,a_3$ are distinct values in $C.$

  Suppose that $\zeta^l \in A_g(a_l).$ Choose $R>0$ such that  $\zeta^l\in S_R=\{\xi\in C^n : |\xi|<R\}.$ Since normality is a local property, the restriction of a family $\{g_j-a_l\}$  to any open ball $s_n(\zeta^l):=\{\xi\in C^n : |\xi-\zeta^l|<1/n\}\subset S_R$  is a normal family.
 By Hurwitz' theorem to $A_{g_j}(a_l)\cap s_n(\zeta^l)\neq\emptyset$   for $j$ sufficiently large since $g$ is not a constant function.
It is routine to show that there exist a sequence $\{p_{j}^l\}\subset S_K,$   such that  $g_j(p_{j}^l)\to a_l.$

Since $\rho_j\to 0$ we see that $z_j+\rho_j p_{j}^l\in K$ for $k$ sufficiently large.
  Now by (\ref{e1}),
$f^\sharp_{j}(z_{j} + \rho_{j}p_{j}^l) \leq M $ for $k$ sufficiently large, so that
$$ g^\sharp(\zeta^l) = \lim_{k\to \infty}g^\sharp_{j}(p_{j}^l)
= \lim_{k\to \infty}\rho_{j}f^\sharp_{j}(z_{j} + \rho_{j}p_{j}^l)\leq \lim_{k\to \infty}\rho_{j}M=0.$$
Thus $g^\sharp(\zeta^l) = 0.$

Since
$$g^\sharp(\zeta^l) =\max_{\{v\in C^n: |v|=1\}} \frac{|\frac{d}{d\lambda}g(\zeta^l+\lambda\cdot v)|_{\lambda=0}|^2}{1+|g(\zeta^l)|^2}=0$$
it follows $a_l$ is a finite totally ramifies value for $g(\zeta^l+\lambda\cdot v)$ ( $v\in C^n$ be  arbitrary (but fixed) and $|v|=1$).

If $\{\xi\in C^n : \xi=\zeta^l+\lambda\cdot v\}\cap A_g(a^k)\ni \zeta^k $ then $\zeta^k=\zeta^l+\lambda_k\cdot v$ for some $\lambda_k\in C.$
Arguing as above we have
$$ g^\sharp(\zeta^k)= g^\sharp(\zeta^l+\lambda_k\cdot v)=0.$$
Hence $a_k$ is a totally ramified value for $g(\zeta^l+\lambda\cdot v).$

If $\{\xi\in C^n : \xi=\zeta^l+\lambda\cdot v\}\cap A_g(a^k)=\emptyset$ then $a^k$ is omitted value for $g(\zeta^l+\lambda\cdot v)$ and hence a totally ramified (finite) value of the function $g(\zeta^l+\lambda\cdot v).$

Thus  $a_1, a_2, a_3$ are  three totally ramified (finite) values for the entire function
$g\Big(\zeta^l+\lambda\cdot v\Big).$
By Nevanlinna's theorem $g\Big(\zeta^l+\lambda\cdot v\Big)$  is constant.
 Since $v$ was arbitrary the function $g$ is constant, a contradiction.
\end{proof}

\end{document}